\newcommand {\R}	{\mathbb{R}}
\newcommand {\N}	{\mathbb{N}}
\DeclareMathOperator{\Id}{Id}
\newcommand{\loc}{{loc}}
\newcommand{\dX}{{\partial X}}
\newcommand{\sL}{\mathcal{L}}
\newcommand{\rC}{\mathrm{C}}
\newcommand{\rW}{\mathrm{W}}
\newcommand{\rL}{\mathrm{L}}
\newcommand{\ddn}{\frac{\partial^a}{\partial \nu^g}}
\renewcommand{\epsilon}{\varepsilon}
\renewcommand{\div}{\text{div}}
\setlist[enumerate]{font = \normalfont}
\theoremstyle{plain}
\newtheorem{thm}{Theorem}[section]
\newaliascnt{cor}{thm}
\newaliascnt{prop}{thm}
\newaliascnt{lem}{thm}
\newtheorem{cor}[cor]{Corollary}
\newtheorem{lem}[lem]{Lemma}
\newcounter{stp}
\newcounter{stpi}
\newcounter{stpci}
\newcounter{stpiii}
\theoremstyle{theorem}
\theoremstyle{definition}
\newaliascnt{defn}{thm}
\newaliascnt{asu}{thm}
\newaliascnt{con}{thm}
\theoremstyle{remark}
\newaliascnt{rem}{thm}
\newaliascnt{exa}{thm}
\newaliascnt{masu}{thm}
\newaliascnt{nota}{thm}
\newaliascnt{sett}{thm}
\newtheorem{rem}[rem]{Remark}
\newtheorem{nota}[nota]{Notation}
\newtheorem{sett}[sett]{Abstract Setting}
\numberwithin{equation}{section}
\title [Elliptic operators on continuous functions on manifolds with boundary]
{Strictly elliptic operators with generalized Wentzell boundary conditions on continuous functions on manifolds with boundary}
\author{Tim Binz}
\subjclass{47D06, 34G10, 47E05, 47F05}%
\keywords{Wentzell boundary conditions, Dirichlet-to-Neumann operator, analytic semigroup, Riemmanian manifolds}%
\date{\today}%
\begin{document}

\maketitle

\begin{abstract}
We prove that strictly elliptic operators with generalized Wentzell boundary conditions generate analytic semigroups of angle $\nicefrac{\pi}{2}$ on the space of continuous function on a compact manifold with boundary.
\end{abstract}

\section{Introduction}

We start from a strictly elliptic differential operators $A_m$ with domain $D(A_m)$ on the space $C(\overline{M})$ of continuous functions on a smooth, compact, orientable Riemannian manifold $(\overline{M},g)$ with smooth boundary $\partial M$. Moreover, let $C$ be a strictly elliptic differential operator on the boundary, take $\ddn:D(\ddn)\subset C(\overline{ M}) \to C(\partial {M})$ to be the outer conormal derivative, and functions $\eta, \gamma\in C(\partial M)$ with $\eta$ strictly positive
and a constant $q > 0$. In this setting we define the operator $A^B \subset A_m$ with \emph{generalized Wentzell boundary conditions} by requiring
\begin{equation}\label{eq:bc-W-Lap} 
f\in D(A^B)
\quad:\iff\quad f \in D(A_m) \cap D(B), \ 
A_m f\big|_{\partial M}= q \cdot C f|_{\partial M} - \eta \cdot \ddn f+\gamma\cdot f\big|_{\partial\Omega}.
\end{equation}
On a bounded domain $\Omega \subset \R^n$ with sufficiently smooth boundary $\partial \Omega$, Favini, Goldstein, Goldstein, Obrecht and Romanelli in \cite{FGGR:10} showed that for $A_m = \Delta_\Omega$ and $C = \Delta_{\partial \Omega}$ the operator 
$A^B$ generates an analytic semigroup of angle $\nicefrac{\pi}{2}$ on 
$C(\overline{\Omega})$. In a preprint Goldstein, Goldstein and Pierre in \cite{GGP:17} generalized this statement to arbitrary elliptic differential operators of the form
$A_m f := \sum_{l,k = 1}^n \partial_l (a^{kl} \partial_k f)$ and $C \varphi := \sum_{l,k = 1}^n \partial_l (\alpha^{kl} \partial_k \varphi )$. 

\smallskip 

Our main theorem \autoref{mainthm} generalizes these results to arbitrary strictly elliptic operators
$A_m$ and $C$ on smooth, compact, orientable Riemannian manifolds with smooth boundary.

\smallskip

The situation $q = 0$ on bounded, smooth domains in $\R^n$ was studied by Engel and Fragnelli \cite{EF:05} and, on smooth, compact, orientable Riemannian manifolds by \cite{Bin:18b}. 

\smallskip

The paper is organized as follows. In the second section we introduce the abstract setting from \cite{EF:05} and \cite{BE:18} for our problem. In the third section 
we study the special case that $A_m$ is the Laplace-Beltrami operator and $B$ is the normal derivative. In the last section we 
generalize to arbitrary strictly elliptic operators and their conormal derivatives.

\smallskip

Throughout the whole paper we use the Einstein notation for sums and write
$x_i y^i$ shortly for $\sum_{i=1}^{n}x_i y^i$. Moreover we denote by $\hookrightarrow$ a continuous and by $\stackrel{c}{\hookrightarrow}$ a compact embedding.

\section{The abstract setting}

As in \cite[Section 2]{EF:05} the basis of our investigation is the following

\begin{sett}\label{set:AS}
Consider
\begin{enumerate}[(i)]
\item two Banach spaces $X$ and $\dX$, called \emph{state} and 
\emph{boundary space}, respectively;
\item a densely defined \emph{maximal operator}
$A_m \colon D(A_m) \subset X \rightarrow X$;
\item a \emph{boundary (or trace) operator} $L \in \sL(X,\dX)$;
\item a \emph{feedback operator} $B \colon D(B) \subseteq X \rightarrow \dX$.
\end{enumerate}
\end{sett}

Using these spaces and operators we define the operator $A^B:D(A^B)\subset X\to X$
with abstract \emph{generalized Wentzell boundary conditions} as
\begin{equation}\label{eq:W-BC}
A^B f := A_m f, \quad 
D(A^B):= \bigl\{ f \in D(A_m) \cap D(B) : LA_mf = Bf \bigr\} .  
\end{equation}
For an interpretation of Wentzell- as ``dynamic boundary conditions'' we refer to \cite[Sect.~2]{EF:05}.

\smallskip
In the sequel we need the following operators. 

\begin{nota} 
The kernel of $L$ is a closed subspace and we
consider the restriction $A_0\subset A_m$ given by
\begin{alignat*}{3}
A_{0}:D(A_0)\subset X\to X,\quad D(A_{0}) := \{ f \in D(A_m) : Lf = 0 \}. 
\end{alignat*}
\end{nota}

The \emph{abstract
Dirichlet operator associated with $A_m$}
is, if it exists, 
\begin{equation*}
L^{A_m}_0 := (L|_{\ker(A_m)})^{-1} \colon \dX
\rightarrow \ker(A_m) \subseteq X,
\end{equation*}
i.e. $L^{A_m}_0 \varphi = f$ is the unique solution of the abstract Dirichlet problem
\begin{equation}
\begin{cases}
A_m f = 0, \\
Lf = \varphi .
\end{cases}\label{Dirichlet Problem}
\end{equation}
If it is clear which operator $A_m$ is meant, we simply write $L_0$.


\smallskip 

Finally, we introduce the \emph{abstract Dirichlet-to-Neumann operator associated with $(A_m,B)$}, defined by 
\begin{equation*}
N^{A_m, B} \varphi :=BL^{A_m}_0 \varphi,
\quad
D(N^{A_m, B}) := \bigl\{\varphi \in \dX : L^{A_m}_0 \varphi \in D(B) \bigr\}.
\end{equation*}
If it is clear which operators $A_m$ and $B$ are meant, we write $N=N^{A_m,B}$ and call it the (abstract) Dirichlet-to-Neumann operator.

\section{Laplace-Beltrami operator with generalized Wentzell boundary conditions}

Take now as maximal operator $A_m \colon D({A}_m) \subset \rC(\overline{M}) \to \rC(\overline{M})$ the Laplace-Beltrami operator $\Delta_M^g$ with domain $D(A_m) := \left\{ f \in \bigcap_{p > 1} \rW^{2,p}_{\loc}(M) \cap \rC(\overline{M}) \colon A_m f \in \rC(\overline{M}) \right\}$.
Moreover consider another strictly elliptic differential operator $C \colon D(C) \subset \rC(\partial M) \to \rC(\partial M)$ in divergence form on the boundary space. 
To this end, take real valued functions 
\begin{align*}
\alpha^k_j = \alpha_j^k \in \rC^\infty(\partial M), \quad \beta_j \in \rC(\partial M), \quad \gamma \in \rC(\partial M), \quad 1\leq j,k\leq n,
\end{align*}
such that $\alpha_j^k$ are strictly elliptic,~i.e.
\begin{align*}
\alpha_j^k(q) g^{jl}(q) X_k (q) X_l(q) > 0
\end{align*}
for all co-vectorfields $X_k,X_l$ on $\partial M$ with $(X_1(q),\dots,X_n(q)) \not = (0,\dots,0)$. Let $\alpha = (\alpha_j^k)_{j,k = 1,\dots,n}$ the $1$-$1$-tensorfield and $\beta = (\beta_j)_{j = 1,\dots,n}$. Moreover we denote by $|\alpha|$ the determinate of $\alpha$ and define $C \colon D(C) \subset \rC(\partial M) \to \rC(\partial M)$ by
\begin{align}
C \varphi &:= \sqrt{|\alpha|} \div_g \left( \frac{1}{\sqrt{|\alpha|}} \alpha \nabla_{\partial M}^g \varphi \right) + \langle \beta, \nabla_{\partial M}^g \varphi \rangle + \gamma \cdot \varphi, \label{Def: C} \\
D(C) &:= \left\{ \varphi \in \bigcap_{p > 1} \rW^{2,p}(\partial M) \colon C \varphi \in \rC(\partial M) \right\}. \notag
\end{align}
In order to define the feedback operator we first consider
$B_0 \colon D(B_0) \subset \rC(\overline{M}) \to \rC(\partial M)$ given by 
\begin{equation*}
B_0 f := - g( a \nabla_M^g f, \nu_g), \quad D(B_0) := \left\{ f \in \bigcap_{p > 1} \rW^{2,p}_{\loc}(M) \cap \rC(\overline{M}) \colon B_0 f \in \rC(\partial M) \right\}.
\end{equation*}
This leads to the feedback operator $B \colon D(B) \subset \rC(\overline{M}) \to \rC(\partial M)$ is defined as
\begin{align*}
Bf &:= q \cdot C L f - \eta \cdot g(\nabla_M^g f, \nu_g), \\
D(B) &:= \{ f \in D(A_m) \cap D(B_0) \colon Lf \in D(C) \},
\end{align*}
where $L \colon \rC(\overline{M}) \to \rC(\partial M) \colon f\mapsto f|_{\partial M}$ denotes the trace operator and $q > 0$ and $\eta \in \rC(\overline{M})$ is positive. 
Now we consider the operator with Wentzell boundary conditions on $\rC(\overline{M})$ as defined in \eqref{eq:W-BC} with respect to the operators $A_m$ and $B$ above.

\smallskip  

Note that the feedback operator $B$ can be splitted into
\begin{equation*}
B = q \cdot C L + \eta \cdot B_0.
\end{equation*}

The following proof is inspired by \cite{Eng:03} and similar to \cite[Ex.~5.3]{BE:18}.



\begin{lem}\label{GWBC B A_0}
	The operator $B$ is relatively $A_0$-bounded of bound $0$.
\end{lem}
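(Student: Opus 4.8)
The plan is to estimate the two summands of $B = q\cdot CL + \eta\cdot B_0$ separately, using interpolation-type inequalities on Sobolev spaces together with the Sobolev embedding theorem on the compact manifold $\overline M$ (of dimension $n$). For the term $\eta\cdot B_0$, note that $\eta$ is bounded, so it suffices to bound $B_0 f = -g(a\nabla_M^g f,\nu_g)$, which is controlled by the trace of $\nabla_M^g f$ on $\partial M$. For $f\in D(A_0)$ one has $f\in \rW^{2,p}_{\loc}(M)\cap\rC(\overline M)$ with $A_0 f\in\rC(\overline M)$ and $Lf=0$; by elliptic regularity for the Laplace--Beltrami operator with Dirichlet boundary condition, $\|f\|_{\rW^{2,p}}$ is controlled by $\|A_0 f\|_{\rC(\overline M)}+\|f\|_{\rC(\overline M)}$, and then the trace theorem $\rW^{2,p}(M)\hookrightarrow \rW^{1-1/p,p}(\partial M)\hookrightarrow \rC^1$-type estimates (for $p$ large) give $\|B_0 f\|_{\rC(\partial M)}\le C\|f\|_{\rW^{2,p}}$. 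First I would combine this with a Gagliardo--Nirenberg / Ehrling-type interpolation inequality of the form $\|f\|_{\rW^{2,p}}\le \epsilon\|A_0 f\|_{\rC(\overline M)}+C_\epsilon\|f\|_{\rC(\overline M)}$, or more directly interpolate the trace norm so that the top-order part carries an arbitrarily small constant; this yields $\|\eta\cdot B_0 f\|_{\rC(\partial M)}\le \epsilon\|A_0 f\| + C_\epsilon\|f\|$.

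For the term $q\cdot CL f$: since $Lf\in D(C)$ and $C$ is a second-order strictly elliptic operator on the closed manifold $\partial M$, the norm $\|CLf\|_{\rC(\partial M)}$ is comparable (via elliptic regularity on $\partial M$) to $\|Lf\|_{\rW^{2,p}(\partial M)}$ up to lower-order terms. Now $Lf$ is the trace of $f\in \rW^{2,p}_{\loc}(M)\cap\rC(\overline M)$; but a second-order operator applied to the trace needs control of roughly "two and a half" derivatives of $f$ near $\partial M$, which is not available merely from $f\in D(A_0)$. The key observation that rescues this is that $f\in D(A_0)$ with $A_0 f\in\rC(\overline M)$ gives, locally near the boundary in normal coordinates, enough regularity: writing the Laplacian as $\partial_\nu^2 + (\text{tangential second order}) + (\text{lower order})$, one sees that the tangential second derivatives of $f$ restricted to $\partial M$ — which is what $CLf$ involves — are controlled by $A_0 f$ and $\partial_\nu^2 f$; but on $\partial M$, where $Lf=0$, the tangential part of $\Delta_M^g f$ equals $\Delta_{\partial M}^g(Lf)$ plus curvature terms, hence $CLf$ is again controlled by $\|f\|_{\rW^{2,p}}$ near the boundary. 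Combined with the same interpolation inequality, $\|q\cdot CLf\|_{\rC(\partial M)}\le \epsilon\|A_0 f\|+C_\epsilon\|f\|$.

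The main obstacle is precisely this last point: making rigorous that applying the second-order boundary operator $C$ to the trace $Lf$ does not actually cost more regularity than $f\in\rW^{2,p}_{\loc}(M)$ provides, i.e. that the combination $CLf$ only sees tangential second derivatives which are encoded in $A_m f$ when $Lf=0$. Concretely I would work in boundary normal coordinates, decompose $\Delta_M^g = \partial_r^2 + H(r)\partial_r + \Delta_{g_r}$ where $g_r$ is the induced metric on the level set at distance $r$, restrict to $r=0$, use $Lf = f|_{r=0}=0$ to identify $\Delta_{g_0}(Lf)$ with the tangential part, and thus write $CLf$ as a first-order (in the $\rW^{2,p}$ scale) expression in $A_m f$, $\partial_r f|_{\partial M}$ and lower-order traces of $f$ — all controlled by $\|f\|_{\rW^{2,p}_{\loc}}$. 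Once this identity is in place, the relative boundedness with bound $0$ follows from the standard interpolation inequality $\|f\|_{\rW^{2,p}} \le \epsilon\|A_0 f\|_{\rC(\overline M)} + C_\epsilon \|f\|_{\rC(\overline M)}$ valid because $A_0$ generates an analytic semigroup on $\rC(\overline M)$ (so fractional powers interpolate), together with the Sobolev embeddings on $\overline M$ and on $\partial M$ for $p$ sufficiently large.
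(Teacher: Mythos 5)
Your treatment of the term $\eta\cdot B_0$ is essentially the paper's proof: elliptic regularity gives $[D(A_0)]\hookrightarrow \rW^{2,p}(M)$, Sobolev/Rellich embedding gives $\rW^{2,p}(M)\inc\rC^1(\overline M)$ for $p$ large, $B_0$ is bounded from $\rC^1(\overline M)$ to $\rC(\partial M)$, and Ehrling's lemma turns the compactness of the embedding into the relative bound $0$. That part is fine.

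The part of your plan devoted to $q\cdot CLf$, however, rests on a misconception, and it is precisely the step you single out as ``the main obstacle.'' Relative $A_0$-boundedness only requires estimating $Bf$ for $f\in D(A_0)$, and $D(A_0)\subset\ker(L)$ by definition. Hence $Lf=0$ and therefore $CLf=C(0)=0$ identically: the term you propose to control by a boundary-normal-coordinate decomposition of $\Delta_M^g$, identification of tangential second derivatives with parts of $A_mf$, and interpolation, simply vanishes. (For the same reason the domain condition $Lf\in D(C)$ needed for $f\in D(B)$ is automatic.) Your plan even invokes $Lf=f|_{r=0}=0$ in the middle of the computation without drawing this conclusion; instead you continue to speak of ``the tangential second derivatives of $f$ restricted to $\partial M$ --- which is what $CLf$ involves,'' but all tangential derivatives of a function that vanishes identically on $\partial M$ are zero, so there is nothing to estimate and no loss of ``half a derivative'' to worry about. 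This is the one-line observation with which the paper opens its proof (``$B$ and $\eta\cdot B_0$ coincide on $D(A_0)$''), and it is what makes the lemma elementary; as written, your proposal replaces it with a delicate and unnecessary argument whose intermediate claims (e.g.\ writing $CLf$ as a first-order expression in $A_mf$ and $\partial_r f|_{\partial M}$) are not needed and would require justification you do not supply.
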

\begin{proof}
	Since $D(A_0) \subset \ker(L)$, the operators $B$ and $\eta \cdot B_0$ coincide on $D(A_0)$. Hence it remains to prove the statement for the operator $B_0$. 
	By \cite[Chap.~5., Thm.~1.3]{Tay:96} 
	and the closed graph theorem we obtain
	\begin{align*}
	[D(A_0)] \hookrightarrow \rW^{2,p}(M) .
	\end{align*}
	Rellich's embedding (see \cite[Thm.~6.2, Part III.]{Ada:75}) implies
	\begin{equation*}
	\rW^{2,p}(M) \stackrel{c}{\hookrightarrow}\rC^{1,\alpha}(M) \stackrel{c}{\hookrightarrow}\rC^1(\overline{M})
	\end{equation*}
	for $p > \frac{m-1}{1-\alpha}$, so we obtain
	\begin{align*}
	[D(A_0)] \stackrel{c}{\hookrightarrow}\rC^1(\overline{M}) \hookrightarrow \rC(\overline{M}) . 
	\end{align*}
	Therefore, by Ehrling's lemma (cf. \cite[Thm.~6.99]{RR:04}), for every $\varepsilon >0$ there exists a constant $C_\varepsilon >0$ such that
	\begin{equation*}
	\| f \|_{\rC^1(\overline{M})} \leq \varepsilon \| f \|_{A_0} + C_\varepsilon \| f \|_X
	\end{equation*}
	for every $f \in D(A_0)$. Since $B_0 \in \mathcal{L}(\rC^1(\overline{M}),\dX)$,
	this implies the claim. 
\end{proof}

\begin{lem}
	The operator $N^{\Delta_m,B_0}$ is relatively $C$-bounded of bound $0$. 
\end{lem}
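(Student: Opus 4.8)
The plan is to show that the Dirichlet-to-Neumann operator $N^{\Delta_m,B_0}$ maps $D(C)$ into $\rC(\partial M)$ in a way that is controlled by $\|C\varphi\|$ with arbitrarily small constant, exploiting elliptic regularity for the Dirichlet problem together with a compact embedding on the boundary. First I would recall that for $\varphi \in D(N^{\Delta_m,B_0})$ we have $N^{\Delta_m,B_0}\varphi = B_0 L_0 \varphi = -g(a\nabla_M^g L_0\varphi,\nu_g)$, where $L_0\varphi$ solves the Dirichlet problem $\Delta_M^g f = 0$, $Lf = \varphi$. By standard elliptic regularity for the Dirichlet problem on the compact manifold with boundary (the same Taylor reference used in the previous lemma), the solution operator $L_0$ is bounded from $\rW^{2,p}(\partial M)$ into $\rW^{2,p}(M)$, in fact one gains essentially two derivatives up to the boundary; combined with the trace theorem and the boundedness of $B_0$ on $\rC^1(\overline M)$, this shows $N^{\Delta_m,B_0}$ is at least bounded from $\rW^{s}(\partial M)$ into $\rC(\partial M)$ for some $s$ strictly below the order-$2$ scale on which $C$ is defined.

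The heart of the argument is then the same Ehrling-type interpolation trick used in \autoref{GWBC B A_0}, but now on the boundary manifold. One establishes the chain of embeddings
\begin{equation*}
[D(C)] \hookrightarrow \rW^{2,p}(\partial M) \stackrel{c}{\hookrightarrow} \rC^{1,\alpha}(\partial M) \hookrightarrow \rC^1(\partial M),
\end{equation*}
where the first embedding follows from elliptic regularity for the strictly elliptic operator $C$ on the closed manifold $\partial M$ together with the closed graph theorem, and the compact embedding is Rellich--Kondrachov (for $p$ large enough relative to $\dim\partial M = m-1$). Since $[D(C)] \stackrel{c}{\hookrightarrow} \rC^1(\partial M) \hookrightarrow \rC(\partial M) = \dX$, Ehrling's lemma gives: for every $\varepsilon>0$ there is $C_\varepsilon>0$ with $\|\varphi\|_{\rC^1(\partial M)} \leq \varepsilon\|\varphi\|_C + C_\varepsilon\|\varphi\|_{\dX}$ for all $\varphi\in D(C)$. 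Finally, I would combine this with the boundedness of $N^{\Delta_m,B_0} \colon \rC^1(\partial M) \to \dX$ — which needs the intermediate step that $L_0$ maps $\rC^1(\partial M)$-data (more precisely $\rW^{s,p}$-data for a sufficiently small $s$, reached by $\rC^{1,\alpha}$) boundedly into $\rC^1(\overline M)$ so that $B_0$ can be applied — to conclude $\|N^{\Delta_m,B_0}\varphi\|_{\dX} \leq \varepsilon\|\varphi\|_C + C_\varepsilon'\|\varphi\|_{\dX}$, which is exactly relative $C$-boundedness of bound $0$.

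The main obstacle I anticipate is pinning down the correct regularity statement for the Dirichlet solution operator $L_0$ on the scale between $\rC(\overline M)$ and $\rW^{2,p}$, i.e.\ verifying that $N^{\Delta_m,B_0}$ is bounded from some space $Y$ with $[D(C)] \stackrel{c}{\hookrightarrow} Y \hookrightarrow \dX$ — the natural candidate being $\rC^1(\partial M)$ or a Sobolev/Hölder space of order slightly below $2$ on the boundary. The point is that $N^{\Delta_m,B_0}$ is a first-order (order-one) pseudodifferential operator on $\partial M$, hence loses one derivative, so it is bounded $\rW^{s,p}(\partial M)\to \rW^{s-1,p}(\partial M) \hookrightarrow \rC(\partial M)$ for $s$ slightly larger than $1$; since $D(C)$ embeds compactly into $\rW^{2,p}(\partial M) \hookrightarrow \rW^{s,p}(\partial M)$ for $1 < s < 2$, everything is consistent. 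Once the mapping property $N^{\Delta_m,B_0}\colon \rW^{s,p}(\partial M)\to\dX$ bounded is secured for such an intermediate $s$, the Ehrling argument applies verbatim (using that $[D(C)]\stackrel{c}{\hookrightarrow}\rW^{s,p}(\partial M)$) and the bound-$0$ conclusion is immediate; the remaining estimates are routine.
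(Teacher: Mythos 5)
Your argument is essentially correct, but it follows a genuinely different route from the paper. The paper does not interpolate directly: it invokes the structural identity $N^{\Delta_m,B_0} = -(-\Delta_{\partial M}^g)^{\nicefrac{1}{2}} + P$ with $P$ relatively bounded of bound $0$ (taken from the proof of the Dirichlet-to-Neumann result in the author's companion preprint), then uses the standard fractional-power estimate from Pazy to conclude that $N^{\Delta_m,B_0}$ is relatively $\Delta_{\partial M}^g$-bounded of bound $0$, and finally transfers this to $C$ via the uniform ellipticity estimate $\| \Delta_{\partial M}^g \varphi \|_{\rC(\partial M)} \leq \Lambda \| C \varphi \|_{\rC(\partial M)}$ on $D(C) = D(\Delta_{\partial M}^g)$. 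Your approach instead reruns the Ehrling scheme of the preceding lemma on the boundary manifold, with intermediate space $\rW^{s,p}(\partial M)$ for some $1 < s < 2$. This is viable, and you correctly flag (and route around) the one genuine trap: the Dirichlet-to-Neumann operator is a first-order operator that is \emph{not} bounded from $\rC^1(\partial M)$ to $\rC(\partial M)$, so the intermediate space must sit strictly above order $1$; with $Y = \rW^{s,p}(\partial M)$, $s-1 > \nicefrac{(m-1)}{p}$, the chain $[D(C)] \hookrightarrow \rW^{2,p}(\partial M) \stackrel{c}{\hookrightarrow} Y \hookrightarrow \dX$ plus Ehrling does give the bound-$0$ estimate. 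The trade-off: the paper's route outsources all hard analysis to the already-proved square-root decomposition and a textbook fractional-power lemma, whereas your route must independently establish the mapping property $N^{\Delta_m,B_0} \colon \rW^{s,p}(\partial M) \to \rC(\partial M)$ — i.e.\ elliptic regularity for the Dirichlet problem with fractional Sobolev boundary data on a manifold with boundary — which is essentially the same pseudodifferential input repackaged, and is rather more than the ``routine'' you claim; you also need $[D(C)] \hookrightarrow \rW^{2,p}(\partial M)$ for all large $p$, an elliptic regularity statement for $C$ that replaces the paper's direct comparison of $C$ with $\Delta_{\partial M}^g$. Neither point is a gap, but both would need to be carried out or precisely cited to make your version complete.
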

\begin{proof}
	Let $W := -(\Delta_{\partial M}^g)^{\nicefrac{1}{2}}$ and remark that by the proof of \cite[Thm.~3.8]{Bin:18b} there exists a relatively $W$-bounded perturbation $P$ of bound $0$ such that
	\begin{equation*}
	N^{\Delta_m,B_0} = W + P .
	\end{equation*}
	Therefore \cite[Thm.~3.8]{Paz:83} implies that $N^{\Delta_m,B_0}$ is relatively $\Delta_{\partial M}^g$-bounded of bound $0$. 
	Using the (uniform) ellipticity of $C$, there exists a constant $\Lambda > 0$ such that
	\begin{align*}
	\| \Delta_{\partial M}^g \varphi \|_{\rC(\partial M)}
	\leq \Lambda \cdot \| C \varphi \|_{\rC(\partial M)}
	\end{align*}
	for $\varphi \in D(C) = D(\Delta_{\partial M}^g)$. Hence $N^{\Delta_m,B_0}$ is relatively $C$-bounded of bound $0$. 
\end{proof}

Now the abstract results of \cite{BE:18} leads to the desired result.

\begin{thm}\label{LB gWBC}
	The operator $A^B$ with Wentzell boundary conditions associated to the Laplace-Beltrami operator $\Delta_m=\Delta_M^g$ generates a compact and analytic semigroup of angle $\nicefrac{\pi}{2}$ on $\rC(\overline{M})$.
\end{thm}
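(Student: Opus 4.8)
The plan is to invoke the abstract generation theorem from \cite{BE:18} (the one alluded to just before the statement), whose hypotheses are precisely the two lemmas we have just proved together with good mapping properties of the Dirichlet operator $L_0$ and suitable generation properties of $A_0$ on $X$ and of $C$ on $\dX$. Concretely, I would verify the following chain of facts and then quote the abstract result.

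First I would record the two ``building block'' generation results. On the state space, $A_0 = \Delta_M^g$ with Dirichlet boundary conditions generates a compact, analytic semigroup of angle $\nicefrac{\pi}{2}$ on $\rC(\overline{M})$ (this is the standard result for the Laplace–Beltrami operator with Dirichlet conditions on a compact manifold with boundary, e.g.\ via the maximum principle and elliptic regularity, cf.\ \cite{Tay:96} and the theory in \cite{Bin:18b}); compactness of the resolvent follows from the compact embedding $[D(A_0)] \inc \rC(\overline{M})$ established in the proof of \autoref{GWBC B A_0}. On the boundary space, $C$ with domain $D(C)$ generates a compact, analytic semigroup of angle $\nicefrac{\pi}{2}$ on $\rC(\partial M)$, again because $C$ is strictly elliptic on the closed manifold $\partial M$ and hence, after the lower-order perturbation $\langle\beta,\nabla^g_{\partial M}\cdot\rangle + \gamma\cdot$, still generates such a semigroup (the principal part $\sqrt{|\alpha|}\,\div_g(\tfrac{1}{\sqrt{|\alpha|}}\alpha\nabla^g_{\partial M}\,\cdot\,)$ is of the divergence-form type covered by the Laplace–Beltrami theory, and lower-order terms are relatively bounded of bound $0$).

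Next I would assemble the hypotheses of the abstract Wentzell theorem of \cite{BE:18}. The feedback operator splits as $B = q\cdot CL + \eta\cdot B_0$; on $\ker L = D(A_0)$ it reduces to $\eta\cdot B_0$, which by \autoref{GWBC B A_0} is relatively $A_0$-bounded of bound $0$. The Dirichlet operator $L_0 = L^{\Delta_m}_0$ exists and is bounded (solvability of the Dirichlet problem for $\Delta_M^g$ with continuous boundary data, again \cite{Tay:96}), and the associated Dirichlet-to-Neumann operator $N = N^{\Delta_m,B_0}$ was shown in the second lemma to be relatively $C$-bounded of bound $0$. These are exactly the structural conditions under which the abstract theorem asserts that $A^B$ defined by \eqref{eq:W-BC} generates an analytic semigroup of angle $\nicefrac{\pi}{2}$ on $X = \rC(\overline{M})$; compactness of this semigroup follows from the compactness of the resolvents of $A_0$ and of $C$ (equivalently, the compact embeddings of the graph-norm domains), which is stable under the construction.

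I expect the main obstacle to be purely bookkeeping: checking that the particular operators $A_0$, $C$, $L_0$, $N$ here literally satisfy every hypothesis of the abstract theorem of \cite{BE:18} — in particular the required generation and angle-$\nicefrac{\pi}{2}$ properties of $C$ on $\rC(\partial M)$ (a closed-manifold analogue of the state-space result, which needs the strict ellipticity assumption on $\alpha$ and the $\rC^\infty$ regularity of the $\alpha^k_j$ to run elliptic regularity), and the consistency of domains, e.g.\ $D(C) = D(\Delta^g_{\partial M})$ and $L_0$ mapping $\dX$ into a space on which $B_0$ acts with the stated relative bound. Once these are in place, the conclusion — including compactness and the optimal angle $\nicefrac{\pi}{2}$ — is immediate from the cited abstract machinery, and I would simply write: ``By \autoref{GWBC B A_0}, the preceding lemma, and the generation results for $A_0$ and $C$ recalled above, all hypotheses of \cite[the relevant theorem]{BE:18} are satisfied, which yields the claim.''
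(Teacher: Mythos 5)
Your proposal is correct and follows essentially the same route as the paper: verify the hypotheses of the abstract theorem of \cite{BE:18} (Thm.~4.3) using \autoref{GWBC B A_0}, the relative $C$-boundedness of $N^{\Delta_m,B_0}$, the existence and boundedness of $L_0$, and the generation properties (sectoriality of angle $\nicefrac{\pi}{2}$ and compact resolvent) of $A_0$ on $\rC(\overline{M})$ and of $C$ on $\rC(\partial M)$, which the paper sources from \cite{Bin:18a} and \cite{Bin:18b}. The only detail the paper adds that you omit is the reduction to $q=1$ by a multiplicative perturbation, which is minor bookkeeping.
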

\begin{proof}
	We verify the assumptions of \cite[Thm.~4.3]{BE:18}.
	Remark that by \cite[Lem.~3.6]{Bin:18b} and \autoref{GWBC B A_0} above the Dirichlet operator $L_0 \in \mathcal{L}(\rC(\partial M),\rC(\overline{M}))$ exists and $B$ is relatively $A_0$-bounded of bound $0$.
	By multiplicative perturbation 
	we assume without loss of generality that $q~=~1$. 
	Now \cite[Thm.~1.1]{Bin:18a} 
	implies that $A_0$ is sectorial of angle $\nicefrac{\pi}{2}$ on $\rC(\overline{M})$ and has compact resolvent. Moreover by \cite[Cor.~3.6]{Bin:18a} the operator $C$ generates compact and analytic semigroup of angle $\nicefrac{\pi}{2}$ on $\rC(\partial M)$.
	Finally, the claim follows by \cite[Thm.~4.3]{BE:18}.
\end{proof}

\section{Elliptic operators with generalized Wentzell boundary conditions}

Consider a strictly elliptic differential operator $A_m \colon D(A_m) \subset \rC(\overline{M}) \to \rC(\overline{M})$ in divergence form on the boundary space. 
To this end, let 
\begin{align*}
a^k_j = a_j^k \in \rC^\infty(\overline{M}), \quad b_j \in \rC_c(\overline{M}), \quad c \in \rC(\overline{M}), \quad 1\leq j,k\leq n
\end{align*}
be real-valued functions, such that $a_j^k$ are elliptic,~i.e.
\begin{align*}
a_j^k(q) g^{jl}(q) X_k (q) X_l(q) > 0
\end{align*}
for all co-vectorfields $X_k,X_l$ on $\overline{M}$ with $(X_1(q),\dots,X_n(q)) \not = (0,\dots,0)$. Let $a = (a_j^k)_{j,k = 1,\dots,n}$ the $1$-$1$-tensorfield and $b = (b_j)_{j = 1,\dots,n}$. Then we define $A_m \colon D(A_m) \subset \rC(\overline{M}) \to \rC(\overline{M})$ by
\begin{align}
A_m f &:= \sqrt{|a|} \div_g \left( \frac{1}{\sqrt{|a|}} a \nabla_{M}^g f \right) + \langle b, \nabla_{M}^g f \rangle + c \cdot f, \label{Def: A_m M} \\
D(A_m) &:= \left\{ \varphi \in \bigcap_{p > 1} \rW^{2,p}_{\loc}(M) \cap \rC(\overline{M}) \colon A_m f \in \rC(\overline{M}) \right\}. \notag
\end{align}

We consider a $(2,0)$-tensorfield on $\overline{M}$ given by
\begin{align*}
\tilde{g}^{kl} = a^k_i g^{il} .
\end{align*}
Its inverse $\tilde{g}$ is a $(0,2)$-tensorfield on $\overline{M}$, which is a Riemannian metric since $a^k_j g^{jl}$ is strictly elliptic on $\overline{M}$. 
We denote $\overline{M}$ with the old metric by $\overline{M}^g$ and
with the new metric by $\overline{M}^{\tilde{g}}$ and remark that
$\overline{M}^{\tilde{g}}$ is a smooth, compact, orientable Riemannian manifold with smooth boundary $\partial M$. 
Since the differentiable structures of $\overline{M}^g$ and $\overline{M}^{\tilde{g}}$ coincide, the identity
\begin{equation*}
\Id \colon \overline{M}^g \longrightarrow \overline{M}^{\tilde{g}}
\end{equation*}
is a $\rC^\infty$-diffeomorphism. Hence,
the spaces
\begin{align*}
X := \rC(\overline{ M}) &:=\rC(\overline{M}^{\tilde{g}}) =\rC(\overline{M}^g)
\\
\text{and } \quad
\partial X := \rC(\partial M) &:=\rC(\partial M^{\tilde{g}}) =\rC(\partial M^g)
\end{align*} 
coincide.
Moreover, \cite[Prop.~2.2]{Heb:00} implies that the following spaces coincide
\begin{align}
\rL^p(M) &:= \rL^p(M^{\tilde{g}}) = \rL^p(M^g), \notag \\
\rW^{k,p}(M) &:= \rW^{k,p}(M^{\tilde{g}}) = \rW^{k,p}(M^g), \notag \\
\rL^p_{loc}(M) &:= \rL^p_{loc}(M^{\tilde{g}}) = \rL^p_{loc}(M^g), \notag \\
\rW^{k,p}_{loc}(M) &:= \rW^{k,p}_{loc}(M^{\tilde{g}}) = \rW^{k,p}_{loc}(M^g), \label{sobolev spaces} \\
\rL^p(\partial M) &:= \rL^p(\partial M^{\tilde{g}}) = \rL^p(\partial M^g), \notag \\
\rW^{k,p}(\partial M) &:= \rW^{k,p}(\partial M^{\tilde{g}}) = \rW^{k,p}(\partial M^g), \notag \\
\rL^p_{loc}(\partial M) &:= \rL^p_{loc}(\partial M^{\tilde{g}}) = \rL^p_{loc}(\partial M^g), \notag \\
\rW^{k,p}_{loc}(\partial M) &:= \rW^{k,p}_{loc}(\partial M^{\tilde{g}}) = \rW^{k,p}_{loc}(\partial M^g) \notag  
\end{align} 
for all $p > 1$ and $k \in \N$.
Denote by $\hat{A}_m$ the maximal operator defined in \eqref{Def: A_m M} with $b_j = c = 0$ and by $\hat{C}$ the operator given in \eqref{Def: C} for $\beta_j = \gamma = 0$. Moreover, denote the corresponding feedback operator by $\hat{B}$.

\smallskip 

Next, we look at the operators $A_m$, $B_0$ and $C$ with respect to the new metric $\tilde{g}$. 

\begin{lem}\label{LB}
The operator $\hat{A}_m$ and the Laplace-Beltrami operator $\Delta_M^{\tilde{g}}$ coincide on $\rC(\overline{M})$.
\end{lem}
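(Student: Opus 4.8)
The plan is to verify the identity in local coordinates, where both operators become explicit second-order differential expressions, and then to check that the respective natural domains agree. Fix a chart on $M$. Write $g_{ij}$ for the components of $g$ and $|g| := \det(g_{ij})$; the gradient $\nabla_M^g f$ has components $g^{il}\partial_l f$, applying the $(1,1)$-tensor $a = (a^k_j)$ produces the vector field with components $a^k_i g^{il}\partial_l f = \tilde g^{kl}\partial_l f$, and the $g$-divergence of a vector field $V = (V^k)$ is $\div_g V = \tfrac{1}{\sqrt{|g|}}\,\partial_k(\sqrt{|g|}\,V^k)$. Hence, in coordinates,
\begin{equation*}
\hat A_m f = \sqrt{|a|}\,\div_g\!\Big(\tfrac{1}{\sqrt{|a|}}\,a\nabla_M^g f\Big) = \frac{\sqrt{|a|}}{\sqrt{|g|}}\,\partial_k\!\Big(\frac{\sqrt{|g|}}{\sqrt{|a|}}\,\tilde g^{kl}\partial_l f\Big),
\end{equation*}
while $\Delta_M^{\tilde g} f = \tfrac{1}{\sqrt{|\tilde g|}}\,\partial_k(\sqrt{|\tilde g|}\,\tilde g^{kl}\partial_l f)$ with $|\tilde g| := \det(\tilde g_{kl})$.

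The key computation is the determinant relation. Since $\tilde g^{kl} = a^k_i g^{il}$ is a matrix product, $\det(\tilde g^{kl}) = \det(a^k_i)\cdot\det(g^{il}) = |a|\cdot|g|^{-1}$; strict ellipticity makes $\tilde g^{kl}$ positive definite, so this determinant is positive, which forces $|a| > 0$ and $|\tilde g| = 1/\det(\tilde g^{kl}) = |g|/|a|$, i.e. $\sqrt{|\tilde g|} = \sqrt{|g|}/\sqrt{|a|}$. Substituting this into the coordinate expression for $\Delta_M^{\tilde g}$ yields precisely the coordinate expression for $\hat A_m$ found above, so the two differential operators agree on every chart, hence on $\bigcap_{p>1}\rW^{2,p}_{\loc}(M)$.

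It remains to match the domains. By definition, $D(\Delta_M^{\tilde g})$ consists of those $f \in \bigcap_{p>1}\rW^{2,p}_{\loc}(M^{\tilde g})\cap\rC(\overline M^{\tilde g})$ with $\Delta_M^{\tilde g} f \in \rC(\overline M^{\tilde g})$, and by \eqref{sobolev spaces} together with $\rC(\overline M^{\tilde g}) = \rC(\overline M^g)$ this is the same set of functions as $D(\hat A_m)$, once one knows $\hat A_m f = \Delta_M^{\tilde g} f$ pointwise, which was just established. Therefore $\hat A_m = \Delta_M^{\tilde g}$ on $\rC(\overline M)$.

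The argument is entirely routine; the only point that demands a little care is the determinant bookkeeping — distinguishing $|a| = \det(a^k_j)$ from $\det(a^k_i g^{il})$ and keeping track of the positivity that makes the square roots meaningful. I do not expect a genuine obstacle here.
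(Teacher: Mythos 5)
Your proof is correct and follows essentially the same route as the paper: compute both operators in local coordinates and reduce the identity to the determinant relation $|\tilde g| = |g|/|a|$ (the paper writes this as $|g| = |a|\cdot|\tilde g|$). Your extra care with the domain identification via \eqref{sobolev spaces} and with the positivity of the determinants is a welcome elaboration of what the paper leaves implicit, but it is not a different argument.
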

\begin{proof}
	Using local coordinates we obtain
	\begin{align*}
	\hat{A}_m f &= \frac{1}{\sqrt{|g|}} \sqrt{|a|} \partial_j \left(\sqrt{|g|} \frac{1}{\sqrt{|a|}} a_l^j g^{kl} \partial_k f\right) \\
	&= \frac{1}{\sqrt{|\tilde{g}|}} \partial_j \left(\sqrt{|\tilde{g}|} \tilde{g}^{kl} \partial_k f\right) = \Delta^{\tilde{g}}_m f
	\end{align*}
	for $f \in D(\hat{A}_m)= D(\Delta^{\tilde{g}}_m)$, since $|g| = |a| \cdot |\tilde{g}|$.
\end{proof}

Now we compare the maximal operators $A_m$ and $\hat{A}_m$.

\begin{lem}\label{Stoerung A}
The operators $A_m$ and $\hat{A}_m$ differ only by a relatively bounded perturbation of bound $0$. 
\end{lem}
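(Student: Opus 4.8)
The plan is to write $P := A_m - \hat{A}_m$ and to show that $P$ is relatively $\hat{A}_m$-bounded with bound $0$. Comparing the two instances of \eqref{Def: A_m M} (with and without lower-order terms), $P$ is the first-order operator
\begin{equation*}
Pf = \langle b, \nabla_M^g f \rangle + c\cdot f .
\end{equation*}
Since $b \in \rC_c(\overline{M})$ is supported in a compact subset $K$ of the open manifold $M$, and since every $f \in \bigcap_{p>1}\rW^{2,p}_{\loc}(M)$ is of class $\rC^1$ on a relatively compact coordinate neighbourhood of $K$ (Sobolev embedding, for $p$ large), the function $Pf$ lies in $\rC(\overline{M})$ exactly when $\hat{A}_m f$ does. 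Hence $D(A_m) = D(\hat{A}_m)$, which by \autoref{LB} equals $D(\Delta_M^{\tilde{g}})$, so it only remains to establish the relative bound.

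Next I would fix open sets $U', U''$ with $K \subset U' \subset\subset U'' \subset\subset M$ and invoke interior $\rL^p$-regularity for the (globally uniformly elliptic) operator $\hat{A}_m = \Delta_M^{\tilde{g}}$ — see e.g. \cite[Chap.~5]{Tay:96} — which gives, for each $p>1$, an a priori estimate
\begin{equation*}
\| f \|_{\rW^{2,p}(U'')} \leq \mathrm{const}\cdot \bigl( \| \hat{A}_m f \|_{\rL^p(M)} + \| f \|_{\rL^p(M)} \bigr), \qquad f \in D(\hat{A}_m),
\end{equation*}
and hence (closed graph theorem) the continuous inclusion $[D(\hat{A}_m)] \hookrightarrow \rW^{2,p}(U'')$. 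Choosing $p$ large and applying Rellich's embedding (\cite[Thm.~6.2, Part III.]{Ada:75}) on $U''$ yields
\begin{equation*}
[D(\hat{A}_m)] \stackrel{c}{\hookrightarrow} \rC^1(\overline{U'}) \hookrightarrow \rC(\overline{U'}).
\end{equation*}
Ehrling's lemma (\cite[Thm.~6.99]{RR:04}) then provides, for every $\varepsilon>0$, a constant $C_\varepsilon>0$ with $\| f \|_{\rC^1(\overline{U'})} \leq \varepsilon\,\| f \|_{\hat{A}_m} + C_\varepsilon\,\| f \|_{\rC(\overline{M})}$ for all $f \in D(\hat{A}_m)$.

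Since $\operatorname{supp} b \subset U'$, this finishes the proof:
\begin{equation*}
\| Pf \|_{\rC(\overline{M})} \leq \| b \|_\infty\,\| f \|_{\rC^1(\overline{U'})} + \| c \|_\infty\,\| f \|_{\rC(\overline{M})} \leq \| b \|_\infty\,\varepsilon\,\| f \|_{\hat{A}_m} + \bigl( \| b \|_\infty C_\varepsilon + \| c \|_\infty \bigr) \| f \|_{\rC(\overline{M})},
\end{equation*}
and letting $\varepsilon \to 0$ shows that $P$ has relative bound $0$ with respect to $\hat{A}_m$, i.e. $A_m$ and $\hat{A}_m$ differ only by a relatively bounded perturbation of bound $0$. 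The one genuine technical point is the interior regularity estimate: it must be set up so that the purely local character of the bound is respected and the (potentially bad) behaviour of $f$ near $\partial M$ never enters — which is precisely why it is crucial that $b$ vanishes near the boundary. The remaining ingredients (closed graph, Rellich, Ehrling) are routine, and the zeroth-order term $c\cdot f$ is already bounded, hence trivially of relative bound $0$.
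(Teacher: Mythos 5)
Your proof is correct and follows essentially the same route as the paper: split $A_m-\hat{A}_m$ into the first-order part $\langle b,\nabla_M^g\,\cdot\,\rangle$ and the bounded zeroth-order part $c\cdot{}$, embed $[D(\hat{A}_m)]$ compactly into a $\rC^1$-space via elliptic regularity and Rellich, and conclude with Ehrling's lemma. The only difference is that you localize the regularity estimate to a relatively compact neighbourhood of $\operatorname{supp} b$, which makes explicit the point the paper leaves implicit when it passes from $b\in\rC_c(M)$ to $P_1\in\mathcal{L}(\rC^1(M),\rC(\overline{M}))$; this is a welcome clarification but not a different argument.
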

\begin{proof}
Using \eqref{sobolev spaces} we define
\begin{align*}
P_1 f := b_l g^{kl} \partial_k f 
\end{align*}
for $f \in D(A_m) \cap D(\hat{A}_m)$.
Morreys embedding (cf. \cite[Chap.~V. and Rem.~5.5.2]{Ada:75}) implies
\begin{align}
\bigl[D(\hat{A}_m)\bigr] \stackrel{c}{\hookrightarrow} \rC^1({M}) \hookrightarrow \rC(M).
\label{Embeddings}
\end{align}
Since $b_l \in \rC_c(M)$, we obtain
\begin{align*}
	\| P_1f \|_{\rC(\overline{M})} &\leq \sup_{q \in \overline{M}} | b_l(q) g^{kl}(q) (\partial_k f)(q) |\\
	&= \sup_{q \in {M}} | b_l(q) g^{kl}(q) (\partial_k f)(q) | \\
	&\leq C \sum_{k = 1}^n \| \partial_k f \|_{\rC(M)}
\end{align*}
and therefore $P_1 \in \mathcal{L}(\rC^1({M}), \rC(\overline{M}))$. Hence $D(\hat{A}_m)=D(\tilde{A}_m)$. By \eqref{Embeddings} we conclude from Ehrling's Lemma
(see \cite[Thm.~6.99]{RR:04}) that
\begin{align*}
	\| P_1f \|_{\rC(\overline{M})} \leq C \| f \|_{\rC^1(M)} &\leq \varepsilon \| \hat{A}_m f \|_{\rC(\overline{M})} + \varepsilon \| f \|_{\rC(\overline{M})} + C(\varepsilon) \| f \|_{\rC(M)} \\
	&\leq \varepsilon \| \hat{A}_m f \|_{\rC(\overline{M})} + \tilde{C}(\varepsilon) \| f \|_{\rC(\overline{M})}
\end{align*}
for $f \in D(\hat{A}_m)$ and all $\varepsilon > 0$. Hence $P_1$ is relatively $A_m$-bounded of bound $0$. Finally remark that
\begin{align*}
	P_2 f := c \cdot f, \quad D(P_2) := \rC(\overline{M})
\end{align*}
is bounded and that
\begin{align*}
	\tilde{A}_m f = \hat{A}_m f + P_1 f + P_2 f
\end{align*}
for $f \in D(\hat{A}_m)$.
\end{proof}

\begin{lem}\label{Stoerung B}
The operators $B_0$ and the negative conormal derivative $-\frac{\partial^{\tilde{g}}}{\partial \nu}$ coincide.
\end{lem}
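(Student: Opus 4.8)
The claim concerns only the first-order boundary jet of $f$, so the plan is to localise: work in an arbitrary boundary chart, say one in which $\partial M = \{x^n = 0\}$ and $M$ lies in $\{x^n < 0\}$, and verify there both the operator identity and the equality of domains. Before that I would record the purely algebraic fact on which everything rests, namely
\[
a\,\nabla_M^g f \;=\; \nabla_M^{\tilde g} f \qquad\text{on } M .
\]
This is immediate from the defining relation $\tilde g^{kl} = a^k_i g^{il}$: the $k$-th component of $a\,\nabla_M^g f$ is $a^k_i g^{il}\partial_l f = \tilde g^{kl}\partial_l f$, which is exactly the $k$-th component of the $\tilde g$-gradient. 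Equivalently, first raising an index with $g$ and then applying the $(1,1)$-tensor $a$ coincides with raising it with $\tilde g$; the conditions built into $D(B_0)$ guarantee that the trace of this vector field on $\partial M$ is continuous.

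With this in hand $B_0 f = -g(\nabla_M^{\tilde g} f,\nu_g)$, and the task reduces to identifying $g(\nabla_M^{\tilde g} f,\nu_g)$ with the conormal derivative $\tfrac{\partial^{\tilde g}}{\partial\nu}f$. I would argue through the Green-formula characterisation of the conormal derivative: it is the unique boundary operator $N$ with
\[
\int_M (\hat A_m f)\,h \;+\; \int_M \langle \nabla_M^{\tilde g} f,\nabla_M^{\tilde g} h\rangle_{\tilde g} \;=\; \int_{\partial M} (Nf)\,h
\]
for all smooth $h$, the integrals being taken with respect to the natural volume and surface elements. Computing $N$ from the divergence-form representation $\hat A_m f = \sqrt{|a|}\,\div_g\!\big(\tfrac{1}{\sqrt{|a|}}\,a\nabla_M^g f\big)$ by integrating by parts with respect to $g$ produces $Nf = g(a\nabla_M^g f,\nu_g)$; in this step the weight $\tfrac{1}{\sqrt{|a|}}$ and the change from the $g$- to the $\tilde g$-boundary volume element are reconciled by the scalar identity $|g| = |a|\cdot|\tilde g|$ — the same one used in the proof of \autoref{LB} — together with the cofactor relation $\det(h|_{\partial M}) = h^{nn}\det h$ applied to $h = g$ and $h = \tilde g$. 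Since $\hat A_m = \Delta_M^{\tilde g}$ by \autoref{LB}, this $N$ is by definition the conormal derivative $\tfrac{\partial^{\tilde g}}{\partial\nu}$, so $g(a\nabla_M^g f,\nu_g) = \tfrac{\partial^{\tilde g}}{\partial\nu}f$ and hence $B_0 f = -\tfrac{\partial^{\tilde g}}{\partial\nu}f$.

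It remains to check that the domains coincide, which is immediate: $D(B_0)$ and the domain of $\tfrac{\partial^{\tilde g}}{\partial\nu}$ both consist of the $f \in \bigcap_{p>1}\rW^{2,p}_{\loc}(M)\cap\rC(\overline{M})$ for which the (now common) boundary expression lies in $\rC(\partial M)$. I expect the only genuinely delicate point to be the normalisation bookkeeping in the middle step — keeping straight which metric normalises the outer normal $\nu$ and which surface measure enters the conormal derivative — but once $|g| = |a|\cdot|\tilde g|$ and the cofactor formula are at hand this is entirely elementary, and the whole argument stays within a collar neighbourhood of $\partial M$.
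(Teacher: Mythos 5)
Your proposal is correct in substance but takes a genuinely different route from the paper. The paper disposes of the lemma by a two-line computation in local coordinates: it writes $-\frac{\partial^{\tilde g}}{\partial\nu}f$ and $B_0f$ out in indices and observes that both reduce to $-\tilde g^{km}\partial_kf\,\nu_m$, using only the identity $a^k_ig^{il}=\tilde g^{kl}$ --- the same algebraic fact you isolate as $a\nabla^g_Mf=\nabla^{\tilde g}_Mf$, which is indeed the heart of the matter. Your detour through the Green-formula characterization of the conormal derivative is heavier machinery for what is a pointwise first-order identity, but it buys something: it makes explicit where the weight $\frac{1}{\sqrt{|a|}}$, the relation $|g|=|a|\cdot|\tilde g|$ and the two surface elements enter, all of which the paper's index computation suppresses. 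Precisely there, however, your claim that everything ``is reconciled'' exactly is too strong, and the same looseness is present in the paper's proof: if $\frac{\partial^{\tilde g}}{\partial\nu}$ is normalized with the $\tilde g$-unit outward normal $\nu_{\tilde g}$, the bookkeeping closes only up to the positive factor $\sqrt{g^{nn}/\tilde g^{nn}}$ (in coordinates adapted to $\partial M$), i.e.\ the ratio of the $g$- and $\tilde g$-lengths of the conormal covector. This is already visible for $a=\lambda\Id$ with constant $\lambda>0$: then $B_0f=-\lambda\,df(\nu_g)$ while $df(\nu_{\tilde g})=\sqrt{\lambda}\,df(\nu_g)$, so the two operators differ by $\sqrt{\lambda}$. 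The paper hides this by raising one and the same covector $\nu_m$ with $g^{im}$ on one side and with $\tilde g^{im}$ on the other, which cannot produce a unit normal for both metrics. The conclusion that should be drawn (and suffices for \autoref{mainthm}, since the discrepancy is a strictly positive smooth function on $\partial M$ that is absorbed into $\eta$) is that $B_0$ and $-\frac{\partial^{\tilde g}}{\partial\nu}$ agree up to such a factor; your domain identification at the end is fine and is unaffected by this rescaling.
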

\begin{proof}
Since the Sobolev spaces coincide, we compute in local coordinates
\begin{align*}
- \frac{\partial^{\tilde{g}}}{\partial \nu} f 
&= - g_{ij} g^{jl} a_l^k \partial_k f g^{im} \nu_m  \\
&= - g_{ij} \tilde{g}^{jl} \partial_k f g^{im} \nu_m \\
&= - \tilde{g}_{ij} \tilde{g}^{jl} \partial_k f \tilde{g}^{im} \nu_m \\ 
&= B_0 f
\end{align*}
for $f \in D(B)=D(\frac{\partial^{\tilde{g}}}{\partial \nu})$.
\end{proof} 



Define $\tilde{C} \colon D(\tilde{C}) \subset \rC(\partial M) \to \rC(\partial M)$ by
\begin{align*}
\tilde{C} \varphi := \sqrt{|\tilde{\alpha}|} \div_{\tilde{g}} \left( \frac{1}{\sqrt{|\tilde{\alpha}|}} \tilde{\alpha} \nabla_{\partial M}^{\tilde{g}} \varphi \right), \quad 
D(C) := \{ \varphi \in \rW^{2,p}(\partial M) \colon C \varphi \in \rC(\partial M) \}, 
\end{align*}
where $\tilde{\alpha}(q) := a(q)^{-1} \cdot \alpha(q)$.

\begin{lem}\label{C tilde g}
	The operators $\hat{C}$ and $\tilde{C}$ coincide on $\rC(\partial M)$.
\end{lem}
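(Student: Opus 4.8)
The statement \autoref{C tilde g} claims that $\hat C$ and $\tilde C$ agree on $\rC(\partial M)$. Both are second-order divergence-form operators on the boundary, $\hat C$ written intrinsically with respect to the original metric $g$ (and the tensor $\alpha$), and $\tilde C$ written with respect to the new metric $\tilde g$ (and the modified tensor $\tilde\alpha = a^{-1}\alpha$). The natural approach is exactly the one used for \autoref{LB} and \autoref{Stoerung B}: pass to local coordinates on $\partial M$, expand both expressions, and check that the coefficient functions coincide once the relation $\tilde g^{kl} = a^k_i g^{il}$ (restricted to $\partial M$) and the definition $\tilde\alpha = a^{-1}\alpha$ are inserted.

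First I would fix a coordinate chart on $\partial M$ and write out $\hat C\varphi$: by \eqref{Def: C} with $\beta_j=\gamma=0$ this is $\sqrt{|\alpha|}\,\div_g\!\big(\tfrac{1}{\sqrt{|\alpha|}}\,\alpha\nabla^g_{\partial M}\varphi\big)$, which in coordinates becomes $\tfrac{1}{\sqrt{|g_{\partial M}|}}\sqrt{|\alpha|}\,\partial_j\!\big(\sqrt{|g_{\partial M}|}\tfrac{1}{\sqrt{|\alpha|}}\,\alpha^j_l g^{kl}_{\partial M}\partial_k\varphi\big)$, where $g_{\partial M}$ denotes the induced metric on $\partial M$. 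Next I would do the same for $\tilde C\varphi = \sqrt{|\tilde\alpha|}\,\div_{\tilde g}\!\big(\tfrac{1}{\sqrt{|\tilde\alpha|}}\,\tilde\alpha\nabla^{\tilde g}_{\partial M}\varphi\big)$, obtaining $\tfrac{1}{\sqrt{|\tilde g_{\partial M}|}}\sqrt{|\tilde\alpha|}\,\partial_j\!\big(\sqrt{|\tilde g_{\partial M}|}\tfrac{1}{\sqrt{|\tilde\alpha|}}\,\tilde\alpha^j_l\tilde g^{kl}_{\partial M}\partial_k\varphi\big)$. Then the key algebraic identities to invoke are $\tilde g^{kl}_{\partial M} = a^k_i g^{il}_{\partial M}$ (so that $\tilde\alpha^j_l\tilde g^{kl}_{\partial M} = (a^{-1})^j_m\alpha^m_l a^k_i g^{il}_{\partial M}$... here I must be careful that $\tilde\alpha^j_l\tilde g^{kl}$ collapses back to $\alpha$ contracted with $g$) and, for the volume factors, $|\tilde g_{\partial M}| = |a|\cdot|g_{\partial M}|$ and $|\tilde\alpha| = |a|^{-1}|\alpha|$ — mirroring the relation $|g| = |a|\cdot|\tilde g|$ used in \autoref{LB}. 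Multiplying these through, the factors of $|a|$ cancel between the volume densities and the modified tensor density, and the coefficient of $\partial_k\varphi$ inside the divergence reduces to $\tfrac{1}{\sqrt{|\alpha|}}\alpha^j_l g^{kl}_{\partial M}$ as required, so the two coordinate expressions agree.

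The main obstacle I anticipate is purely bookkeeping: keeping track of which tensors are raised/lowered with which metric, and verifying that the contraction $\tilde\alpha\,\tilde g^{-1}$ on $\partial M$ genuinely equals $\alpha\,g^{-1}$ on $\partial M$ — this is where one needs that the restriction of $\tilde g$ to $\partial M$ is the metric induced by $\tilde g^{kl} = a^k_i g^{il}$, and that $a$ restricted to $\partial M$ is the relevant tensor appearing in $\tilde\alpha = a^{-1}\alpha$. Since $\alpha$ is already a $(1,1)$-tensor on $\partial M$ and $a$ is a $(1,1)$-tensor on $\overline M$, one should note that only the tangential part of $a$ enters, but because $\tilde\alpha = a^{-1}\alpha$ is defined pointwise as a product of endomorphisms this causes no difficulty. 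Once the coordinate computation is carried out the equality of domains $D(\hat C) = D(\tilde C)$ is immediate since both are characterized by $\rW^{2,p}(\partial M)$-regularity (the Sobolev spaces being metric-independent by \eqref{sobolev spaces}) together with continuity of the image, and the images coincide by what was just shown.
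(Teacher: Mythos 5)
Your proposal follows essentially the same route as the paper: both proofs expand $\hat C$ and $\tilde C$ in local coordinates and reduce the claim to the two algebraic identities $\nicefrac{|\tilde g|}{|\tilde\alpha|}=\nicefrac{|g|}{|\alpha|}$ and $\tilde\alpha^k_l\tilde g^{lj}=\alpha^k_l g^{lj}$, after which the coordinate expressions visibly coincide. Your additional remarks on the induced boundary metric, the order of the endomorphism product in $\tilde\alpha=a^{-1}\alpha$, and the equality of domains are careful elaborations of points the paper leaves implicit, not a different argument.
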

\begin{proof}
	An easy calculation shows
	\begin{align*}
	\frac{|\tilde{g}|}{|\tilde{\alpha}|} &= \frac{|g|}{|\alpha|}, \\
	\tilde{\alpha}^k_l \tilde{g}^{lj} &= \alpha^k_l g^{lj} .
	\end{align*}
	Hence we obtain in local coordinates
	\begin{align*}
	\tilde{C} \varphi &= \sqrt{\frac{|\tilde{\alpha}|}{|\tilde{g}|}} \partial_k \left( \sqrt{\frac{|\tilde{g}|}{|\tilde{\alpha}|}} \tilde{\alpha}^k_l \tilde{g}^{li} \partial_i \varphi \right) \\
	&= \sqrt{\frac{|{\alpha}|}{|{g}|}} \partial_k \left( \sqrt{\frac{|{g}|}{|{\alpha}|}} {\alpha}^k_l {g}^{li} \partial_i \varphi \right) \\
	&= \sqrt{|\alpha|} \div_g \left( \frac{1}{|\alpha|} \alpha \nabla^j \varphi	\right) =\hat{C} \varphi 
	\end{align*}
	for $\varphi \in D(\hat{C})=D(\tilde{C})$.
\end{proof}

Next we compare the operators $C$ and $\hat{C}$.

\begin{lem}\label{Stoerung C}
	The operators $C$ and $\hat{C}$ differ only by a relatively bounded perturbation of bound $0$.
\end{lem}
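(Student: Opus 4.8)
The plan is to mimic the structure of \autoref{Stoerung A}, since $C$ and $\hat C$ stand in exactly the same relation on the boundary as $A_m$ and $\hat A_m$ do on $\overline M$. Write $C = \hat C + Q_1 + Q_2$, where
\begin{align*}
Q_1 \varphi := \langle \beta, \nabla_{\partial M}^g \varphi \rangle, \qquad
Q_2 \varphi := \gamma \cdot \varphi, \qquad D(Q_2) := \rC(\partial M),
\end{align*}
and recall from \autoref{C tilde g} that $\hat C = \tilde C$, so $D(\hat C) = D(\tilde C) = D(\Delta_{\partial M}^{\tilde g})$. First I would observe that $Q_2$ is bounded, hence relatively $\hat C$-bounded of bound $0$, and that $D(C)$, $D(\hat C)$ differ only by these lower-order terms, so the domains coincide once the relative bound statement for $Q_1$ is in place.

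The core step is to show $Q_1$ is relatively $\hat C$-bounded of bound $0$. The argument is the same Morrey–Rellich–Ehrling chain used in \autoref{GWBC B A_0} and \autoref{Stoerung A}, but now on the $(n-1)$-dimensional closed manifold $\partial M$: by \cite[Chap.~5, Thm.~1.3]{Tay:96} (elliptic regularity for $\Delta_{\partial M}^{\tilde g}$) together with the closed graph theorem one gets $[D(\hat C)] \hookrightarrow \rW^{2,p}(\partial M)$ for every $p > 1$; then Rellich's embedding \cite[Thm.~6.2, Part~III]{Ada:75} gives $\rW^{2,p}(\partial M) \stackrel{c}{\hookrightarrow} \rC^{1,\alpha}(\partial M) \stackrel{c}{\hookrightarrow} \rC^1(\partial M)$ for $p$ large enough (here $\dim \partial M = n-1$, so $p > \tfrac{n-1}{1-\alpha}$ suffices), whence
\begin{align*}
[D(\hat C)] \stackrel{c}{\hookrightarrow} \rC^1(\partial M) \hookrightarrow \rC(\partial M).
\end{align*}
Since $\beta_j \in \rC(\partial M)$ and $g^{kl}$ is smooth, $Q_1 \in \mathcal{L}(\rC^1(\partial M), \rC(\partial M))$; Ehrling's lemma \cite[Thm.~6.99]{RR:04} then yields, for every $\varepsilon > 0$ a constant $C_\varepsilon > 0$ with
\begin{align*}
\| Q_1 \varphi \|_{\rC(\partial M)} \leq \varepsilon \| \hat C \varphi \|_{\rC(\partial M)} + C_\varepsilon \| \varphi \|_{\rC(\partial M)}
\end{align*}
for all $\varphi \in D(\hat C)$. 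Combining with the boundedness of $Q_2$ gives the relative bound $0$ claim for $Q_1 + Q_2$, and the decomposition $C = \hat C + Q_1 + Q_2$ finishes the proof.

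The only mild subtlety — rather than a genuine obstacle — is verifying that $D(C) = D(\hat C)$ as sets: one must check that adding the first-order term $Q_1$ and the zeroth-order term $Q_2$ does not change which $\rW^{2,p}(\partial M)$-functions have image in $\rC(\partial M)$. This follows from the $\rC^1(\partial M)$-boundedness of $Q_1$ (so $Q_1 \varphi \in \rC(\partial M)$ automatically for $\varphi \in \rW^{2,p}(\partial M)$ with $p$ large) and the continuity of $\gamma$; I would dispatch it in a single sentence exactly as in \autoref{Stoerung A}. Everything else is a routine transcription of the earlier arguments from $\overline M$ to $\partial M$, so I do not anticipate any real difficulty.
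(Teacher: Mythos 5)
Your proposal is correct and follows essentially the same route as the paper's proof: the paper likewise treats the first- and zeroth-order terms as a single perturbation $P\in\mathcal{L}(\rC^1(\partial M),\rC(\partial M))$, establishes the compact embedding $[D(C)]\stackrel{c}{\hookrightarrow}\rC^1(\partial M)\hookrightarrow\rC(\partial M)$ via Sobolev embeddings and the closed graph theorem, and concludes with Ehrling's lemma. Your splitting into $Q_1+Q_2$ and the extra care about $D(C)=D(\hat C)$ are only more explicit versions of the same argument.
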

\begin{proof}
	Denote by
	\begin{equation*}
	P \varphi := \langle \beta, \nabla_{\partial M}^g \rangle + \gamma \cdot \varphi \text{ for } f \in D(P) := \rC^1(\partial M) 
	\end{equation*}
	and note that $P \in \mathcal{L}(\rC^1({\partial M}), \rC(\partial M))$. The Sobolev embeddings and the closed graph theorem imply
	\begin{align*}
	[D(C)] \stackrel{c}{\hookrightarrow} \rC^1(\partial M) \hookrightarrow \rC(\partial M).
	\end{align*}
	Finally, the claim follows by Ehrling's Lemma (cf. \cite[Thm.~6.99]{RR:04}).
\end{proof}

Now we are prepared to prove our main theorem.

\begin{thm}\label{mainthm}
	The operator $A^B$ with Wentzell boundary conditions generates a compact and analytic semigroup of angle $\nicefrac{\pi}{2}$ on $\rC(\overline{M})$.
\end{thm}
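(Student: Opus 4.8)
The plan is to reduce the general statement to the Laplace--Beltrami case \autoref{LB gWBC} by means of the change of metric $g \rightsquigarrow \tilde g$ introduced above, and then to absorb all lower order terms as relatively bounded perturbations of bound $0$.

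First I would pass to the compact Riemannian manifold $\overline{M}^{\tilde g}$. By \autoref{LB} the principal part $\hat A_m$ is the Laplace--Beltrami operator $\Delta_M^{\tilde g}$; by \autoref{Stoerung B} the operator $B_0$ is the negative conormal derivative $\tfrac{\partial^{\tilde g}}{\partial\nu}$ with a sign, i.e.\ $B_0 = -\tfrac{\partial^{\tilde g}}{\partial\nu}$, associated with $\tilde g$; and by \autoref{C tilde g} the principal part $\hat C$ of the boundary operator equals $\tilde C$, which is again of the divergence form \eqref{Def: C} on $\partial M^{\tilde g}$ with strictly elliptic leading tensor $\tilde\alpha$ (this is precisely the content of the identity $\tilde\alpha^k_l\tilde g^{lj}=\alpha^k_l g^{lj}$). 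Hence the Wentzell operator $\hat A^{\hat B}$ built from $\hat A_m$ and $\hat B = q\hat C L + \eta B_0$ is exactly the operator to which \autoref{LB gWBC}, applied on $\overline{M}^{\tilde g}$ with boundary operator $\tilde C$, applies; it therefore generates a compact analytic semigroup of angle $\nicefrac{\pi}{2}$ on $\rC(\overline M)$.

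It remains to recover $A^B$ from $\hat A^{\hat B}$. By \autoref{Stoerung A}, $A_m = \hat A_m + P_1 + P_2$ with $P_1+P_2$ relatively $\hat A_m$-bounded of bound $0$, and by \autoref{Stoerung C}, $C = \hat C + P$ with $P$ relatively $\hat C$-bounded of bound $0$; correspondingly $B = qCL + \eta B_0 = \hat B + qPL$. I would then rerun the argument from the proof of \autoref{LB gWBC}, verifying the hypotheses of \cite[Thm.~4.3]{BE:18} for the triple $(A_m,B,C)$: the Dirichlet operator $L_0 \in \mathcal{L}(\rC(\partial M),\rC(\overline M))$ still exists (after a bounded shift of $A_m$ if necessary, which does not change the conclusion); the operator $A_0 = \hat A_0 + (P_1+P_2)|_{D(A_0)}$ is still sectorial of angle $\nicefrac{\pi}{2}$ with compact resolvent, since both properties survive relatively bounded perturbations of bound $0$; the feedback operator $B$ is still relatively $A_0$-bounded of bound $0$, because on $D(A_0)\subset\ker L$ it reduces to $\eta B_0$ and $[D(A_0)]\stackrel{c}{\hookrightarrow}\rC^1(\overline M)\hookrightarrow\rC(\overline M)$ with $B_0\in\mathcal{L}(\rC^1(\overline M),\rC(\partial M))$, so Ehrling's lemma applies exactly as in \autoref{GWBC B A_0}; and $C$ still generates a compact analytic semigroup of angle $\nicefrac{\pi}{2}$ on $\rC(\partial M)$ by the perturbation argument already used in \autoref{Stoerung C}. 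Then \cite[Thm.~4.3]{BE:18} yields the assertion.

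The main obstacle is this last step: one must be sure that ``generates a compact analytic semigroup of the maximal angle $\nicefrac{\pi}{2}$'' is stable under the relatively-bounded-of-bound-$0$ perturbations, both for the interior generator $A_0$ and---through the coupling term $qPL$ inside $B$---for the boundary interaction encoded in \cite[Thm.~4.3]{BE:18}. Since $\nicefrac{\pi}{2}$ is the borderline case, the point is that a bound-$0$ perturbation can be absorbed uniformly in every sector of angle $\theta<\nicefrac{\pi}{2}$ (the same mechanism that keeps the second order strictly elliptic operators on $\rC(K)$-spaces used in Section~3 at angle $\nicefrac{\pi}{2}$), while compactness of the resolvent, hence of the semigroup, is unaffected. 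Alternatively, if \cite{BE:18} provides a ready-made perturbation theorem for operators with generalized Wentzell boundary conditions, one invokes it directly with \autoref{Stoerung A} and \autoref{Stoerung C} as input.
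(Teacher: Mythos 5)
Your proposal is correct and follows essentially the same route as the paper: reduce to the Laplace--Beltrami case on $\overline{M}^{\tilde g}$ via \autoref{LB}, \autoref{Stoerung B} and \autoref{C tilde g}, apply \autoref{LB gWBC}, and then absorb the lower-order terms using \autoref{Stoerung A} and \autoref{Stoerung C}. For the final step the paper takes exactly the ``alternative'' you mention, invoking the ready-made perturbation theorem \cite[Thm.~4.2]{BE:18} rather than re-verifying the hypotheses of \cite[Thm.~4.3]{BE:18} by hand.
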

\begin{proof}
	Since $\tilde{C}$ is a strictly elliptic differential operator in divergence form on $\rC(\partial M)$ we obtain by \autoref{LB gWBC} that the Laplace-Beltrami operator with Wentzell boundary conditions given by 
	\begin{equation*}
	 (\Delta_{M}^{\tilde{g}} f)|_{\partial M} =	q \cdot \tilde{C} f|_{\partial M} -\eta \frac{\partial^{\tilde{g}}}{\partial \nu}f  
	\end{equation*}
	generates a compact and analytic semigroup of angle $\nicefrac{\pi}{2}$ on $\rC(\overline{M})$. 
	Now \autoref{LB}, \autoref{Stoerung B} and \autoref{C tilde g} imply that the operator $\hat{A}^{\hat{B}}$ generates a compact and analytic semigroup of angle $\nicefrac{\pi}{2}$ on $\rC(\overline{M})$.
Note that $A_m$ and $\hat{A}_m$ differ only by a relatively $A_m$-bounded perturbation of bound $0$ by \autoref{Stoerung A}.
By \autoref{Stoerung C} one obtains that the perturbation on the boundary is relatively $\hat{C}$-bounded. 
Now the claim follows from \cite[Thm.~4.2]{BE:18}.
\end{proof}

\begin{rem}
	\autoref{mainthm} generalizes the main theorem in \cite{GGP:17} for the case $p = \infty$. 
\end{rem}

\begin{cor}
	The initial-value boundary problem
	\begin{alignat*}{4}
	\left\{
	\begin{array}{lll}
	\frac{d}{dt} u(t,q) &= A_m u(t,q), \quad &t \geq 0, \ q \in \overline{M}, \\
	\frac{d}{dt} \varphi(t,q) &= B u(t,q), &t \geq 0, \ q \in \partial{M}, \\
	u(t,x) &= \varphi(t,x), &t \geq 0, \ x \in \partial{M}, \\
	u(0,q) &= u_0(q) &q \in \overline{M},
	\end{array}
	\right. 
	\end{alignat*}
	on $\rC(\overline{M})$ is well-posed. Moreover the solution $\begin{psmallmatrix}
	u(t) \\ \varphi(t)
	\end{psmallmatrix} \in \rC^\infty(M) \times \rC^\infty(\partial M)$ for $t > 0$ depends analytically on the initial value $\begin{psmallmatrix}
	u_0 \\ u_0|_{\partial M}
	\end{psmallmatrix}$ and is governed by a compact and analytic semigroup, which can be extended to a right half plane.  
\end{cor}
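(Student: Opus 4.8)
The plan is to recognise the stated initial-value boundary problem as the abstract Cauchy problem associated with the operator $A^B$ on $X=\rC(\overline{M})$ and then to invoke \autoref{mainthm}. First I would observe that the third equation $u(t,\cdot)|_{\partial M}=\varphi(t,\cdot)$ identifies the boundary component $\varphi$ with the trace $Lu$; differentiating it in $t$ and comparing with the first equation restricted to $\partial M$ shows that $LA_m u(t)=\tfrac{d}{dt}\varphi(t)=Bu(t)$, which is exactly the generalized Wentzell boundary condition appearing in the definition \eqref{eq:W-BC} of $D(A^B)$. Conversely, any solution $u$ of $\dot u=A^B u$ with $u(0)=u_0$ yields a solution of the full system by setting $\varphi:=Lu$. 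Thus the problem is equivalent to the abstract Cauchy problem $\dot u=A^B u$, $u(0)=u_0$, with $\varphi(0)=Lu_0=u_0|_{\partial M}$.

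By \autoref{mainthm} the operator $A^B$ generates a compact analytic semigroup $(T(t))_{t\ge 0}$ of angle $\nicefrac{\pi}{2}$ on $\rC(\overline{M})$, and standard semigroup theory then gives well-posedness: for every $u_0\in\rC(\overline{M})$ the unique solution is $u(t)=T(t)u_0$ (mild in general, classical for $t>0$), the boundary datum is $\varphi(t)=LT(t)u_0$, and $u_0\mapsto \bigl(u(t),\varphi(t)\bigr)$ is bounded linear, hence depends analytically — indeed linearly — on $u_0$. Since the semigroup is analytic of angle $\nicefrac{\pi}{2}$, the map $t\mapsto T(t)$ extends to a holomorphic operator-valued function on the open right half-plane $\{\re z>0\}$, which gives the asserted extension, and for each $u_0$ the orbit $t\mapsto T(t)u_0$ is real-analytic on $(0,\infty)$.

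It remains to establish the $\rC^\infty$-regularity of the solution for $t>0$. By analyticity one has $T(t)\rC(\overline{M})\subseteq\bigcap_{n\in\N}D\bigl((A^B)^n\bigr)$ for every $t>0$, so it suffices to show $\bigcap_n D((A^B)^n)\subseteq\rC^\infty(\overline{M})$; restricting then gives $Lf\in\rC^\infty(\partial M)$. If $f\in D((A^B)^n)$ then $f,A_m f,\dots,A_m^{\,n}f\in\rC(\overline{M})$, and since $A_m$ is elliptic with smooth coefficients, interior elliptic regularity (a bootstrap in the local $\rW^{2,p}_{\loc}$-scale) yields $f\in\rW^{2n,p}_{\loc}(M)$ for all $p>1$, hence $f\in\rC^\infty(M)$ in the interior. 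For regularity up to $\partial M$ one uses the Wentzell relation $LA_m f=q\cdot CLf-\eta\cdot B_0 f$: this is an elliptic, oblique-derivative-type boundary condition, and combined with the elliptic equation for $A_m$ and the ellipticity of $C$ on $\partial M$, a further bootstrap in $p$ and in the order of differentiation gives $f\in\rC^\infty(\overline{M})$; letting $n\to\infty$ yields the claim.

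The hard part will be exactly this last regularity step. Interior smoothness is a routine elliptic bootstrap, but smoothness \emph{up to the boundary} requires treating the generalized Wentzell condition as a genuine elliptic boundary condition — it mixes the second-order tangential operator $C$, the conormal derivative $B_0$, and the trace of $A_m f$ — so one must either appeal to Lopatinskii--Shapiro-type theory for such systems or iterate the a priori estimates underlying \autoref{mainthm} (via \cite{BE:18}). Everything else — the identification with the abstract problem, well-posedness, analytic dependence on $u_0$, and the half-plane extension — is immediate from \autoref{mainthm} together with the elementary theory of analytic semigroups.
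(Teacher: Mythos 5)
The paper gives no proof of this corollary --- it is stated as an immediate consequence of \autoref{mainthm} --- and your argument (identifying the system with the abstract Cauchy problem for $A^B$ via $\varphi=Lu$ and $LA_mu=Bu$, invoking generation of a compact analytic semigroup of angle $\nicefrac{\pi}{2}$, and using analyticity together with an elliptic bootstrap for the smoothing) is exactly the intended route. Note only that the statement claims $u(t)\in\rC^\infty(M)$ (interior smoothness), so the up-to-the-boundary Lopatinskii--Shapiro step you single out as the hard part is not actually required for the first component, while the smoothness of $\varphi(t)=Lu(t)$ on the closed manifold $\partial M$ is obtained more directly from the boundary dynamics in the framework of \cite{BE:18}, where the boundary component is governed by an analytic semigroup whose generator is a relatively bounded perturbation of the strictly elliptic operator $C$ and whose iterated domains lie in $\rC^\infty(\partial M)$ by elliptic regularity on a manifold without boundary.
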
 

\nocite{EN:00}
\bigskip 


\newcommand{\etalchar}[1]{$^{#1}$}

\
\\[1.5em]
\emph{Tim Binz}, University of Tübingen, Department of Mathematics, Auf der Morgenstelle 10, D-72076 Tübingen, Germany,
\texttt{tibi@fa.uni-tuebingen.de}

\end{document}